\newtheorem{thm}{Theorem}[section]
\newtheorem{prop}[thm]{Proposition}
\newtheorem{lem}[thm]{Lemma}
\theoremstyle{definition}
\newtheorem{definition}[thm]{Definition}
\newtheorem{example}[thm]{Example}
\theoremstyle{remark}
\newtheorem{remark}[thm]{Remark}
\numberwithin{equation}{section}
\newcommand{\Perm}{\mathrm{Perm}}
\newcommand{\Hol}{\mathrm{Hol}}
\newcommand{\Aut}{\mathrm{Aut}}
\newcommand{\End}{\mathrm{End}}
\newcommand{\Map}{\mathrm{Map}}
\newcommand{\bN}{\mathbb{N}}
\newcommand{\E}{\mathcal{E}}
\newcommand{\ff}{\mathfrak{f}}
\newcommand{\fg}{\mathfrak{g}}
\newcommand{\Inn}{\mathrm{Inn}}
\newcommand{\Out}{\mathrm{Out}}
\newcommand{\Hom}{\mathrm{Hom}}
\newcommand{\fa}{\mathfrak{a}}
\newcommand{\fb}{\mathfrak{b}}
\newcommand{\fp}{\mathfrak{p}}
\newcommand{\inn}{\mathrm{\tiny inn}}
\newcommand{\out}{\mathrm{\tiny out}}
\newcommand{\proj}{\mathrm{proj}}
\begin{document}

\large 

\title[Hopf-Galois structures of isomorphic type]{Hopf-Galois structures of isomorphic type on a non-abelian characteristically simple extension}

\author{Cindy (Sin Yi) Tsang}
\address{School of Mathematics, Sun Yat-Sen University, Zhuhai}
\email{zengshy26@mail.sysu.edu.cn}\urladdr{http://sites.google.com/site/cindysinyitsang/} 

\date{\today}

\maketitle

\vspace{-2mm}

\begin{abstract}Let $L/K$ be a finite Galois extension whose Galois group $G$ is non-abelian and characteristically simple. Using tools from graph theory, we shall give a closed formula for the total number of Hopf-Galois structures on $L/K$ with associated group isomorphic to $G$.
\end{abstract}

\tableofcontents

\vspace{-4mm}

\section{Introduction}

Let $L/K$ be a finite Galois extension with Galois group $G$. Write $\Perm(G)$ for the symmetric group of $G$. Recall that a subgroup $\mathcal{N}$ of $\Perm(G)$ is said to be \emph{regular} if the map
\[ \xi_\mathcal{N}: \mathcal{N}\longrightarrow G;\hspace{1em}\xi_\mathcal{N}(\eta) = \eta(1)\]
is bijective, or equivalently, if the $\mathcal{N}$-action on $G$ is both transitive and free. For example, the images of the left and right regular representations
\[\begin{cases}
\lambda: G\longrightarrow \Perm(G);\hspace{1em}\lambda(\sigma) = (\tau\mapsto \sigma\tau),\\
\rho:G\longrightarrow\Perm(G);\hspace{1em}\rho(\sigma) = (\tau\mapsto\tau\sigma^{-1}),
\end{cases}\]
respectively, are plainly regular subgroups of $\Perm(G)$. By work of C. Greither and B. Pareigis \cite{GP}, each Hopf-Galois structure $\mathcal{H}$ on $L/K$ is associated to a regular subgroup $\mathcal{N}_\mathcal{H}$ of $\Perm(G)$ which is normalized by $\lambda(G)$, and the  \emph{type} of $\mathcal{H}$ is defined to be the isomorphism class of $\mathcal{N}_\mathcal{H}$. In particular, for any finite group $N$ of the same order as $G$, there is a one-to-one correspondence between Hopf-Galois structures on $L/K$ of type $N$ and elements in
\[ \E(G,N) = \left\{ \begin{array}{c}\mbox{regular subgroups of $\Perm(G)$ which are}
\\ \mbox{isomorphic to $N$ and normalized by $\lambda(G)$}\end{array}\right\}.\]
The enumeration of this set has since become an active line of research. For example, see work of L. N. Childs, N. P. Byott, and T. Kohl. One important result, which was proven by N. P. Byott in \cite{By96}, is the formula
\[\#\E(G,N) = \frac{|\Aut(G)|}{|\Aut(N)|}\cdot \#\left\{\begin{array}{c}\mbox{regular subgroups in $\Hol(N)$}\\\mbox{which are isomorphic to $G$}\end{array}\right\},\]
where $\Hol(N)$ denotes the \emph{holomorph} of $N$ and is given by
\begin{equation}\label{Hol(N)}\Hol(N)=\rho(N)\rtimes \Aut(N).\end{equation}
In particular, it suffices to study the set
\[ \E'(G,N) = \{\mbox{regular subgroups of $\Hol(N)$ isomorphic to $G$}\},\]
which is much easier to understand because of the nice description (\ref{Hol(N)}). See \cite[Chapter 2]{Childs book} for more background on the study of Hopf-Galois structures.

\vspace{1.5mm}

In this paper, we shall be interested in the Hopf-Galois structures on $L/K$ of type $G$, or equivalently, the regular subgroups lying in $\E'(G,G)$. Let
\[\proj_\Aut:\Hol(G)\longrightarrow \Aut(G)\]
denote the projection map given by (\ref{Hol(N)}), and write $\Inn(G)$ for the group of inner automorphisms on $G$. Define
\begin{align*}
\E'_\inn(G,G) & = \{\mathcal{N}\in\E'(G,G) :\proj_\Aut(\mathcal{N})\subset\Inn(G)\} ,\\
\E'_\out(G,G) & =\{\mathcal{N}\in \E'(G,G) : \proj_\Aut(\mathcal{N})\not\subset\Inn(G)\},
\end{align*}
and we shall consider them separately. Let us remark that $ \E'_\inn(G,G)$ always \par\noindent contains $\lambda(G)$ and $\rho(G)$, which coincide exactly when $G$ is abelian. Further, recall that a pair $(f,g)$ of endomorphisms on $G$ is said to be \emph{fixed point free} if $f(\sigma) = g(\sigma)$ holds precisely when $\sigma = 1$. Then, by work of N. P. Byott and L. N. Childs in \cite{Byott Childs}, such a pair gives rise to an element of $\E'_\inn(G,G)$, and 
\begin{equation}\label{inner equation} \#\E'_\inn(G,G) = \frac{1}{|\Aut(G)|}\cdot\#\left\{ \begin{array}{c}\mbox{fixed point free pairs $(f,g)$}\\ \mbox{of endomorphisms on $G$}\end{array}\right\}\end{equation}
when $G$ has trivial center; see \cite[Propositions 2 and 6]{Byott Childs}.

\vspace{1.5mm}

In the proof of \cite[Theorem 4]{Childs simple}, S. Carnahan and L. N. Childs showed that 
\[ \#\E'_\inn(G,G)=2\mbox{ and }\#\E'_\out(G,G) = 0
\]
when $G$ is non-abelian simple. Our main theorem is the following significant generalization of their result to the case when $G$ is non-abelian characteristically simple, that is, when $G$ is a direct product of copies of some non-abelian simple group. 

\begin{thm}\label{main thm}Suppose that $G$ is a direct product of $n\in\bN$ copies of a finite non-abelian simple group $T$. Then, we have
\[ \#\E'_\inn(G,G) = 2^n\cdot(n|\Aut(T)| + 1)^{n-1}\mbox{ and }\#\E'_\out(G,G) = 0.\]
\end{thm}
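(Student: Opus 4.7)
The plan is to apply the Byott--Childs reduction~(\ref{inner equation}) and then enumerate fixed-point-free pairs of endomorphisms of $G = T^n$ via a graph-theoretic encoding. Since $T$ is non-abelian simple, every nonzero homomorphism $T \to T^n$ maps isomorphically onto a single simple factor of the codomain, and images of distinct input factors must commute in $T^n$ and hence occupy disjoint output coordinates. Consequently, an endomorphism $f$ of $G$ is parameterized by a partial function $\sigma_f\colon \{1,\ldots,n\} \rightharpoonup \{1,\ldots,n\}$ together with an automorphism $\alpha_k \in \Aut(T)$ for each $k$ in the domain of $\sigma_f$.

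Given a pair $(f,g)$ with data $(\sigma,\alpha_\bullet)$ and $(\tau,\beta_\bullet)$, I would associate a constraint multigraph $\Gamma(f,g)$ on the vertex set $\{0,1,\ldots,n\}$: for each $k \in \{1,\ldots,n\}$, place an edge between $\sigma(k)$ and $\tau(k)$, reading undefined values as $0$ and adding no edge when both are undefined. A direct translation then expresses $\{x \in G : f(x) = g(x)\}$ as the joint solution set of the constraints $x_{\sigma(k)} = (\alpha_k^{-1}\beta_k)(x_{\tau(k)})$ with the convention $x_0 \equiv 1$. The key combinatorial claim is the equivalence
\[ (f,g) \text{ is fixed-point-free} \iff \Gamma(f,g) \text{ is a spanning tree of } K_{n+1}. \]
The ``$\Leftarrow$'' direction is immediate by rooting at $0$ and propagating $x_v = 1$ outward along the tree. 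For ``$\Rightarrow$'', suppose $\Gamma(f,g)$ is not a spanning tree and let $C_0, C_1, \ldots, C_r$ be its components, with $C_0$ containing $0$. On each non-$0$ component $C_i$, the solution set reduces to the joint fixed subgroup of $T$ under the $|E(C_i)|-|V(C_i)|+1$ automorphisms obtained from the independent cycles of $C_i$. The classical result that a finite non-abelian simple group has no nontrivial fixed-point-free automorphism (a consequence of Thompson's theorem) then forces that at least two such generators are needed to obtain a trivial joint fixed subgroup, i.e.\ $|E(C_i)| \geq |V(C_i)|+1$. Summing gives $n \geq \sum_i |E(C_i)| \geq (|V(C_0)|-1) + \sum_{i\geq 1}(|V(C_i)|+1) = n+r$, hence $r=0$ and $\Gamma(f,g)$ is already a spanning tree, a contradiction.

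With this equivalence, counting fixed-point-free pairs reduces to a weighted spanning-tree enumeration on $K_{n+1}$: each edge $\{0,v\}$ carries weight $2|\Aut(T)|$ (two ways to choose which of $\sigma(k),\tau(k)$ is undefined, with a free automorphism label), each edge $\{u,v\}$ with $u,v \neq 0$ carries weight $2|\Aut(T)|^2$ (two orientations, each with two automorphism labels), and an additional factor of $n!$ assigns the $n$ tree edges to the $n$ coordinate slots. Applying the matrix-tree theorem to the weighted Laplacian---whose nontrivial block is $(2|\Aut(T)|+2n|\Aut(T)|^2)I-2|\Aut(T)|^2 J$ with eigenvalues $2|\Aut(T)|$ (once) and $2|\Aut(T)|(1+n|\Aut(T)|)$ ($n-1$ times)---produces the weighted sum $(2|\Aut(T)|)^n(n|\Aut(T)|+1)^{n-1}$. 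Multiplying by $n!$ and dividing by $|\Aut(G)|=n!\,|\Aut(T)|^n$ yields $\#\E'_\inn(G,G)=2^n(n|\Aut(T)|+1)^{n-1}$.

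For the vanishing $\#\E'_\out(G,G)=0$, I would show that every regular subgroup $\mathcal{N}\leq\Hol(G)$ with $\mathcal{N}\cong G$ automatically has $\proj_\Aut(\mathcal{N})\subseteq\Inn(G)$. Regularity gives $\mathcal{N}\cap\Aut(G)=1$, and $\proj_\Aut|_\mathcal{N}$ is a homomorphism whose kernel is a product of simple factors of $\mathcal{N}$; thus $\proj_\Aut(\mathcal{N})\cong T^m$ for some $m\leq n$. A structural analysis of how $T^m$ can embed into $\Aut(G)=\Aut(T)\wr S_n$, combined with the regularity constraint and adapting the argument of Carnahan--Childs for the simple case (which uses that $\Inn(T)$ is the unique subgroup of $\Aut(T)$ isomorphic to $T$), then forces $\proj_\Aut(\mathcal{N})\subseteq\Inn(T)^n=\Inn(G)$. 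The main obstacle is the Step-3 equivalence: the edge-versus-vertex accounting for non-spanning-tree configurations must be carefully matched with the algebraic input that finite non-abelian simple groups have no nontrivial fixed-point-free automorphism, and a parallel careful structural analysis is needed to establish the outer-case vanishing.
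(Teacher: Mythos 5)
Your treatment of $\#\E'_\inn(G,G)$ is essentially the paper's argument and is correct. You use the same reduction via the Byott--Childs formula, the same constraint graph on $\{0,1,\dots,n\}$, and the same characterization ``fixed point free $\iff$ tree''; your version of the forward implication (each component not containing $0$ would need at least two independent cycles, since a single cycle yields a single automorphism of $T$, which cannot be fixed point free, and then the edge count $n\geq n+r$ forces $r=0$) is a streamlined merge of the paper's Propositions 2.7 and 2.8. The only genuinely different ingredient is the final enumeration: the paper counts labelled trees with prescribed degree at the vertex $0$ via Clarke's refinement of Cayley's formula, $\mathcal{T}_n(d)=\binom{n-1}{d-1}n^{n-d}$, and sums with the binomial theorem, whereas you run the weighted matrix--tree theorem with edge weights $2|\Aut(T)|$ at $0$ and $2|\Aut(T)|^2$ elsewhere; both give $2^n\,n!\,|\Aut(T)|^n(n|\Aut(T)|+1)^{n-1}$ fixed-point-free pairs, and your route is arguably cleaner. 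Two small cautions: your opening claim that a nonzero homomorphism $T\to T^n$ lands in a single factor is false (diagonal copies exist); the correct statement, which is what you actually use and what the paper cites from Byott, is that each \emph{output} coordinate of an endomorphism of $T^n$ depends on at most one input coordinate. Also, the absence of fixed-point-free automorphisms of arbitrary order on a non-abelian simple group is a CFSG consequence, not just Thompson's theorem.

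The second statement is where your proposal has a genuine gap. Everything after ``a structural analysis of how $T^m$ can embed into $\Aut(T)\,\mathrm{wr}\,S_n$ \dots then forces $\proj_\Aut(\mathcal{N})\subseteq\Inn(G)$'' is a placeholder for the actual proof. The Carnahan--Childs mechanism you invoke (solvability of $\Out(T)$ against perfectness of $T$) only controls the part of $\mathcal{N}$ whose image in $S_n$ is trivial: writing $\mathcal{N}=\{\rho(\fg(\sigma))\ff(\sigma)\}$, it shows $\ff(\ker(\ff_{S_n}))\subseteq\Inn(G)$, but says nothing when the composite $\ff_{S_n}:G\to S_n$ is nontrivial, which can certainly happen for large $n$ (e.g.\ $T^k$ embeds in $S_n$ once $n$ is large relative to $|T|$). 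Ruling this out is exactly where regularity must be used quantitatively, and your sketch contains no mechanism for it. The paper's argument is: pick a prime $p\mid |T|$ and a rank-$n$ elementary abelian $p$-subgroup $H=P^{(1)}\times\cdots\times P^{(n)}$, decompose $\bN_n$ into orbits under $\ff_{S_n}(H)$, and use the commutation relations $\fg(\sigma\tau)=\fg(\sigma)\cdot\ff(\sigma)(\fg(\tau))$ to bound $\#\fg(\ker(\ff_{S_n}))\leq |T|^{\#X_0+2r}$, where $X_0$ is the fixed-point set and $r$ the number of nontrivial orbits. Comparing with $|\ker(\ff_{S_n})|=|T|^{n-m}$ (bijectivity of $\fg$ is where regularity enters) and with the orbit sizes $p^{m_k}$ yields $\sum_k(p^{m_k}-2)\leq\sum_k m_k$, which fails for $p\geq 5$; so a nontrivial $\ff_{S_n}$ forces every prime divisor of $|T|$ to be at most $3$, contradicting Burnside's theorem. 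Without this (or some substitute) counting argument, the claim $\#\E'_\out(G,G)=0$ is not established.
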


In the rest of this paper, we shall assume that
\[ G = T\times\cdots\times T\mbox{ ($n$ copies), where $T$ is any non-trivial finite group}.\]
Note that $T$ is not assumed to be non-abelian simple in general. Put
\[ \bN_n = \{1,\dots,n\}\mbox{ and }\bN_{0,n} = \{0,1,\dots,n\}.\]
For each $i\in\bN_n$, for brevity, define
\[ T^{(i)} = 1\times\cdots\times 1\times T\times 1
\times\cdots\times 1\mbox{ ($T$ is in the $i$th position)},\]
and write $x^{(i)}$ for an arbitrary element of $T^{(i)}$. For convenience, let us define $T^{(0)}$ to be the trivial subgroup, and write $x^{(0)}$ for the identity element. Now, let $\End^0(G)$ denote the group of all endomorphisms on $G$ of the shape
\[(x^{(1)},\dots,x^{(n)})\mapsto (\varphi_1(x^{(\theta(1))}),\dots,\varphi_n(x^{(\theta(n))})),\]
where $\theta \in \Map(\bN_n,\bN_{0,n})$, and for each $i\in\bN_n$, we have
\begin{equation}\label{varphi cond}\varphi_i\in\Hom(T^{(\theta(i))},T^{(i)})\mbox{ such that }\begin{cases}\varphi_i \mbox{ is trivial}&\mbox{if }\theta(i) = 0,\\\varphi_i\mbox{ is bijective}&\mbox{if }\theta(i)\neq0.
\end{cases}\end{equation}
Also, write $\Aut^0(G)$ for the subgroup consisting of those which are automorphisms, or equivalently
\begin{equation}\label{Aut0}\Aut^0(G) = \Aut(T)\mbox{ wr }S_n = \Aut(T)^n \rtimes S_n,\end{equation}
where $S_n$ denotes the symmetric group on $n$ letters. The wreath product ``wr'' here is the canonical one with $S_n$ acting naturally on $\bN_n$. The consideration of $\End^0(G)$ is motivated by the fact that
\begin{equation}\label{End eqn}
\End(G) = \End^0(G)\mbox{ and in particular }\Aut(G)= \Aut^0(G)
\end{equation}
when $T$ is non-abelian simple; see the proof of \cite[Lemma 3.2]{Byott simple}, for example. Using (\ref{End eqn}), as well as drawing tools from graph theory and group theory, respectively, we shall then prove the first and second equalities of Theorem~\ref{main thm}.


\section{Regular subgroups arising from inner automorphisms}\label{inn sec}

\subsection{Criteria for fixed point freeness}

Throughout this subsection, consider a pair $(f,g)$ with $f,g\in\End^0(G)$. Then, we have
\begin{align*}f(x^{(1)},\dots,x^{(n)}) &= (\varphi_{f,1}(x^{(\theta_f(1))}),\dots,\varphi_{f,n}(x^{(\theta_f(n))})),\\
g(x^{(1)},\dots,x^{(n)}) &= (\varphi_{g,1}(x^{(\theta_g(1))}),\dots,\varphi_{g,n}(x^{(\theta_g(n))})),\end{align*}
where $\theta_f,\theta_g\in\Map(\bN_n,\bN_{0,n})$, and $\varphi_{f,i},\varphi_{g,i}$ are as in (\ref{varphi cond}) for $i\in\bN_n$. Put
\[\uptheta_f = (\theta_f(1),\dots,\theta_f(n))\mbox{ and }\uptheta_g = (\theta_g(1),\dots,\theta_g(n)).\]
Using these $n$-tuples, we may associate to $(f,g)$ a graph as follows.

\begin{definition}For any two $n$-tuples $\upmu = (u_1,\dots,u_n)$ and $\upnu = (v_1,\dots,v_n)$ with entries in $\bN_{0,n}$, define $\Gamma_{\{\upmu,\upnu\}}$ to be the undirected multigraph with vertex set $\bN_{0,n}$, and for each $i\in\bN_n$, we draw one edge joining $u_i$ and $v_i$.
\end{definition}

\begin{definition}\label{graph def}Define $\Gamma_{\{f,g\}}$ to be the undirected multigraph associated to the $n$-tuples $\uptheta_f$ and $\uptheta_g$. Define $\Gamma_{(f,g)}$ to be the directed multigraph with vertex set $\bN_{0,n}$, and for each $i\in\bN_n$, we draw one arrow $\fa_i$ from $\theta_f(i)$ to $\theta_g(i)$ if $\varphi_{g,i}$ is bijective, as well as one arrow $\fb_i$ from $\theta_g(i)$ to $\theta_f(i)$ if $\varphi_{f,i}$ is bijective.
\end{definition}

By the condition in (\ref{varphi cond}), the multigraph $\Gamma_{(f,g)}$ may be obtained from $\Gamma_{\{f,g\}}$ via the following operations:
\begin{itemize}
\item Remove every loop at the vertex $0$.
\item Replace every edge $0\mbox{ --- }i$ by the arrow $0\longrightarrow i$ when $i\neq0$.
\item Replace every edge $i\mbox{ --- }j$ by the pair of arrows $i\longleftrightarrow j$ when $i,j\neq0$.
\end{itemize}
Note that in $\Gamma_{(f,g)}$ there is no arrow ending at the vertex $0$. Thus, a directed path in $\Gamma_{(f,g)}$ can start at the vertex $0$, but cannot pass through $0$ or end at $0$. We shall illustrate Definition~\ref{graph def} via the following example.

\begin{example}Take $n=4$. Suppose that
\begin{align*}f(x^{(1)},x^{(2)},x^{(3)},x^{(4)})& = (\varphi_{f,1}(x^{(0)}),\varphi_{f,2}(x^{(1)}),\varphi_{f,3}(x^{(2)}),\varphi_{f,4}(x^{(3)})),\\
g(x^{(1)},x^{(2)},x^{(3)},x^{(4)}) &= (\varphi_{g,1}(x^{(0)}),\varphi_{g,2}(x^{(0)}),\varphi_{g,3}(x^{(1)}),\varphi_{g,4}(x^{(3)})),
\end{align*}
where the $\varphi_{f,i},\varphi_{g,i}$ are as in (\ref{varphi cond}). Then, according to Definition~\ref{graph def}, we have
\[\begin{tikzpicture}[baseline = -0.5cm]
\node at (0,0) [name = 0] {$0$};
\node at (2,0) [name = 1] {$1$};
\node at (4,0) [name = 2] {$2$};
\node at (0,-1) [name = 3] {$3$};
\node at (3,-1) [name = 4] {$4$};
\draw[-, thick]
(0) -- (1) -- (2);
\draw[-, thick] (3) to [loop right, out=30, in=330, looseness = 12.5] (3);
\draw[-, thick] (0) to [loop left, out= 150, in=210, looseness = 12.5] (0);
\end{tikzpicture}\hspace{2em}\mbox{and}\hspace{2em}\begin{tikzpicture}[baseline=-0.5cm]
\node at (0,0) [name = 0] {$0$};
\node at (2,0) [name = 1] {$1$};
\node at (4,0) [name = 2] {$2$};
\node at (0,-1) [name = 3] {$3$};
\node at (3,-1) [name = 4] {$4$};
\path[<->,thick] (1) edge node {} (2);
\draw[<->, thick] (3) to [loop right, out=30, in=330, looseness = 12.5] (3);
\draw[->, thick]
(0) -- (1);
\end{tikzpicture}\]
for the graphs $\Gamma_{\{f,g\}}$ and $\Gamma_{(f,g)}$, respectively.
\end{example}

Let us briefly explain the ideas behind Definition~\ref{graph def}. To determine the solutions to $f(x) = g(x)$, we are reduced to considering, for each $i\in\bN_n$, the equation at the $i$th component given by
\[\varphi_{f,i}(x^{(\theta_f(i))}) = \varphi_{g,i}(x^{(\theta_g(i))}).\]
The edge $\mathfrak{e}_i$ joining $\theta_f(i)$ and $\theta_g(i)$ may be viewed as representing this equation, while the arrows $\fa_i$ and $\fb_i$ may be regarded as the homomorphisms
\[ \begin{cases}\gamma_{\fa_i} = \varphi_{g,i}^{-1}\circ\varphi_{f,i}&\mbox{if $\varphi_{g,i}$ is bijective},\\\gamma_{\fb_i} = \varphi_{f,i}^{-1}\circ\varphi_{g,i}&\mbox{if $\varphi_{f,i}$ is bijective},\end{cases}\]
respectively. Observe that $\gamma_{\fa_i}$ and $\gamma_{\fb_i}$ are inverses of each other if both $\varphi_{g,i}$ and $\varphi_{f,i}$ are bijective. Given a directed path $\fp$ in $\Gamma_{(f,g)}$, we may write it as a concatenation of arrows, say
\[ \fp = \mathfrak{c}_{i_m}\cdots \mathfrak{c}_{i_1},\mbox{ where }\mathfrak{c}_{i_k} \in \{\fa_{i_k},\fb_{i_k}\}\mbox{ for each }1\leq k\leq m,\]
and the concatenation is from right to left. Define 
\[\gamma_\fp = \gamma_{\mathfrak{c}_{i_m}}\circ\cdots\circ \gamma_{\mathfrak{c}_{i_1}}\]
in this case. Then, we clearly have the following lemma:

\begin{lem}\label{fg lemma}Let $\sigma = (\sigma^{(1)},\dots,\sigma^{(n)})\in G$. Then, we have $f(\sigma) = g(\sigma)$ if and only if $\sigma^{(h(\fp))} = \gamma_\fp(\sigma^{(t(\fp))})$ holds for all directed paths $\mathfrak{p}$ in $\Gamma_{(f,g)}$, where $h(\fp)$ and $t(\fp)$ denote its head and tail, respectively.
\end{lem}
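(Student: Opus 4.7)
The plan is to write $f(\sigma)=g(\sigma)$ as the system of $n$ componentwise equations
\[ \varphi_{f,i}(\sigma^{(\theta_f(i))}) = \varphi_{g,i}(\sigma^{(\theta_g(i))}) \quad (i\in\bN_n), \]
and check that this system is precisely what the directed-path conditions on $\Gamma_{(f,g)}$ encode. The key observation is that every directed path is a concatenation of single arrows of the form $\fa_i$ or $\fb_i$, so it suffices to understand what a single arrow contributes and then invoke associativity of composition.

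For a single arrow, say $\fa_i$, the arrow is present exactly when $\varphi_{g,i}$ is bijective, in which case the $i$th componentwise equation rearranges to
\[ \sigma^{(\theta_g(i))} = \varphi_{g,i}^{-1}(\varphi_{f,i}(\sigma^{(\theta_f(i))})) = \gamma_{\fa_i}(\sigma^{(t(\fa_i))}), \]
so the single-arrow condition is precisely the $i$th componentwise equation; the case of $\fb_i$ is symmetric. If instead neither $\varphi_{f,i}$ nor $\varphi_{g,i}$ is bijective, then $\theta_f(i)=\theta_g(i)=0$ (producing a loop at $0$ that is discarded when passing from $\Gamma_{\{f,g\}}$ to $\Gamma_{(f,g)}$), both sides of the $i$th componentwise equation equal $1$, and nothing needs to be recorded.

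The forward implication then follows by chaining. Assuming $f(\sigma)=g(\sigma)$, each single-arrow condition holds by the previous paragraph, and for a general path $\fp=\fc_{i_m}\cdots\fc_{i_1}$ one iterates the single-arrow identities to obtain
\[ \sigma^{(h(\fp))} = \gamma_{\fc_{i_m}}\circ\cdots\circ\gamma_{\fc_{i_1}}(\sigma^{(t(\fp))}) = \gamma_\fp(\sigma^{(t(\fp))}), \]
using that the head of $\fc_{i_k}$ coincides with the tail of $\fc_{i_{k+1}}$. The backward implication uses only length-one paths, which recover every componentwise equation that is not already a tautology. The main — and rather minor — obstacle is careful bookkeeping around vertex $0$, together with the observation that when both $\fa_i$ and $\fb_i$ occur the homomorphisms $\gamma_{\fa_i}$ and $\gamma_{\fb_i}$ are mutually inverse, so the corresponding single-arrow conditions are consistent rather than overdetermined. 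Beyond this, the result is a direct unpacking of Definition~\ref{graph def}, consistent with the author's remark that ``we clearly have the following lemma.''
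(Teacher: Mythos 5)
Your proposal is correct and is exactly the unpacking the paper intends: the discussion preceding the lemma already identifies each edge with the $i$th componentwise equation and each arrow with $\gamma_{\fa_i}$ or $\gamma_{\fb_i}$, after which the paper simply asserts the lemma as clear. Your added bookkeeping (the tautologous loops at $0$, the one-sided-arrow case, and the mutual inverseness of $\gamma_{\fa_i}$ and $\gamma_{\fb_i}$) fills in precisely the details the author leaves implicit, so no further comparison is needed.
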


Let us note that  Definition~\ref{graph def} and the forward implication of Lemma~\ref{fg lemma} are still valid even if $\varphi_{f,i},\varphi_{g,i}$ are only non-trivial but not necessarily bijective for $\theta_f(i),\theta_g(i)\neq0$. However, the analysis for determining when $(f,g)$ is fixed point free is much more complicated. For the purpose of this paper, we have thus restricted to the situation when the condition in (\ref{varphi cond}) holds.

\vspace{1.5mm}

We shall now give criteria for $(f,g)$ to be fixed point free in terms of properties of $\Gamma_{\{f,g\}}$. Let us point out that the condition in (\ref{varphi cond}) is crucial for some of the arguments to hold. In particular, it ensures that if we have a path in $\Gamma_{\{f,g\}}$ joining $i$ and $j$ which does not go through $0$ or end at $0$, then we have a directed path in $\Gamma_{(f,g)}$ from $i$ to $j$ as well.

\vspace{1.5mm}

Recall that a \emph{tree} is a connected graph which has no cycle. Equivalently, a \emph{tree} is a graph in which any two vertices can be connected by a unique simple path. For a graph $\Gamma$ with $m$ vertices, it is known that $\Gamma$ is a tree if and only if $\Gamma$ is connected and has exactly $m-1$ edges, for any $m\in\bN$.

\begin{prop}\label{fpf prop1}If $\Gamma_{\{f,g\}}$ is a tree, then $(f,g)$ is fixed point free.
\end{prop}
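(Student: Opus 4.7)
The plan is to show that the equation $f(\sigma) = g(\sigma)$ forces $\sigma^{(i)} = 1$ for every $i \in \bN_n$, so that $(f,g)$ has no non-trivial fixed point. The core idea is to exploit the tree structure of $\Gamma_{\{f,g\}}$ to manufacture, for each $i \in \bN_n$, a directed path in $\Gamma_{(f,g)}$ that starts at the vertex $0$ and terminates at $i$, and then to invoke Lemma~\ref{fg lemma} along each such path.

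First I would observe that, since $\Gamma_{\{f,g\}}$ is a tree on the vertex set $\bN_{0,n}$, for each $i \in \bN_n$ there is a unique simple path $\fp$ in $\Gamma_{\{f,g\}}$ joining $0$ to $i$. Simplicity means that $\fp$ meets $0$ only at its initial vertex; in particular $\fp$ does not pass through $0$ nor end at $0$. Therefore all edges of $\fp$ except possibly the first lie among non-zero vertices, and the first edge has $0$ at one end and a non-zero vertex at the other. Under the conversion rules listed just after Definition~\ref{graph def}, non-initial edges become bidirectional in $\Gamma_{(f,g)}$ and can hence be traversed either way, while the initial edge becomes an arrow pointing outward from $0$. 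Consequently $\fp$ lifts to a directed path $\fp_i^*$ in $\Gamma_{(f,g)}$ with tail $0$ and head $i$. This is exactly the remark the author makes about condition (\ref{varphi cond}) being crucial, and it is the only step that requires real attention.

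Next, suppose $\sigma = (\sigma^{(1)},\dots,\sigma^{(n)}) \in G$ satisfies $f(\sigma) = g(\sigma)$. Applying Lemma~\ref{fg lemma} to the directed path $\fp_i^*$ gives
\[ \sigma^{(i)} \,=\, \gamma_{\fp_i^*}\!\left(\sigma^{(0)}\right). \]
By convention $T^{(0)}$ is the trivial subgroup, so $\sigma^{(0)} = 1$; and $\gamma_{\fp_i^*}$, being a composition of group homomorphisms, sends $1$ to $1$. Hence $\sigma^{(i)} = 1$. Since $i \in \bN_n$ was arbitrary, we conclude $\sigma = 1$, so $(f,g)$ is fixed point free.

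There is no substantive obstacle beyond the lifting step in the first paragraph: once the existence of the directed path $\fp_i^*$ from $0$ to $i$ in $\Gamma_{(f,g)}$ is secured, the proposition follows immediately from Lemma~\ref{fg lemma} and the triviality of $T^{(0)}$. The uniqueness half of the tree characterization is not used; only connectedness (to produce $\fp$) and acyclicity (so that $\fp$ is simple, avoiding a revisit of $0$) are needed.
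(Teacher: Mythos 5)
Your proof is correct and follows essentially the same route as the paper: use connectedness of the tree together with condition (\ref{varphi cond}) to produce, for each $i\in\bN_n$, a directed path $\fp_i$ from $0$ to $i$ in $\Gamma_{(f,g)}$, then apply Lemma~\ref{fg lemma} to get $\sigma^{(i)}=\gamma_{\fp_i}(\sigma^{(0)})=1$. The extra detail you supply about lifting the simple path via the edge-to-arrow conversion rules is exactly the point the paper leaves implicit.
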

\begin{proof}Suppose that $\Gamma_{\{f,g\}}$ is a tree. By the connectedness of $\Gamma_{\{f,g\}}$ and (\ref{varphi cond}), for each $i\in\bN_n$, we have a directed path $\fp_i$ in $\Gamma_{(f,g)}$ from $0$ to $i$. Hence, whenever $f(\sigma) = g(\sigma)$, where $\sigma = (\sigma^{(1)},\dots,\sigma^{(n)})\in G$, we have $\sigma^{(i)} = \gamma_{\fp_i}(\sigma^{(0)}) = 1$ for all $i\in\bN_n$ by Lemma~\ref{fg lemma}. This shows that $(f,g)$ is fixed point free.
\end{proof}

Recall that an automorphism $\varphi$ on $T$ is said to be \emph{fixed point free} $\varphi(\sigma) = \sigma$ precisely when $\sigma = 1$. As the next example shows, the converse of Proposition~\ref{fpf prop1} is false in general, and the issue lies in the existence of fixed point free automorphisms on $T$.


\begin{example}Take $n=2$. Suppose that
\begin{align*} f(x^{(1)},x^{(2)}) &= (\varphi_{f,1}(x^{(1)}),\varphi_{f,2}(x^{(2)})),\\
g(x^{(1)},x^{(2)}) & = (\varphi_{g,1}(x^{(1)}),\varphi_{g,2}(x^{(2)})),\end{align*}
where the $\varphi_{f,i},\varphi_{g,2}$ are as in (\ref{varphi cond}). Then, according to Definition~\ref{graph def}, we have\vspace{-1mm}
\[\begin{tikzpicture}
\node at (0.5,0) [name = 0] {$0$};
\node at (2,0) [name = 1] {$1$};
\node at (4,0) [name = 2] {$2$};
\draw[-, thick] (1) to [loop right, out=30, in=330, looseness = 12.5] (1);
\draw[-, thick] (2) to [loop right, out=30, in=330, looseness = 12.5] (2);
\end{tikzpicture}\vspace{-1mm}\]
for the graph $\Gamma_{\{f,g\}}$, which is not a tree. But it is clear that the pair $(f,g)$ is fixed point free as long as both $\varphi_{f,1}^{-1}\circ\varphi_{g,1}$ and $\varphi_{f,2}^{-1}\circ\varphi_{g,2}$ are fixed point free.
\end{example}

Nevertheless, we have two partial converses of Proposition~\ref{fpf prop1}. Let us first make a crucial observation which gives us a way to construct fixed points of $(f,g)$. Suppose that $\Gamma_*$ is a connected component of $\Gamma_{\{f,g\}}$ not containing $0$ and fix some vertex $i_0$ in $\Gamma_*$. For each vertex $i$ in $\Gamma_*$, by connectedness and (\ref{varphi cond}), we have a directed path $\fp_i$ in $\Gamma_*$ from $i_0$ to $i$. In the case that $\Gamma_*$ has no cycle, there is essentially a unique choice of $\fp_i$, except that it could have consecutive repeated paths going in opposite directions, and hence $\gamma_{\fp_i}$ does not depend upon the choice of $\fp_i$. In the case that $\Gamma_*$ has exactly one simple cycle, suppose that it goes through $i_0$, and let $\mathfrak{q},\mathfrak{q}^{-1}$ denote the corresponding directed simple cycles based at $i_0$. Then, the homomorphism $\gamma_{\fp_i}$ depends on the direction and the number of times $\fp_i$ goes through the simple cycle. But $\gamma_{\fp_i}(\sigma^{(i_0)})$ does not depend upon the choice of $\fp_i$, when $\sigma^{(i_0)}\in T^{(i_0)}$ is a fixed point of the automorphisms $\gamma_{\mathfrak{q}}$ and $\gamma_{\mathfrak{q}^{-1}}$. Note that $\gamma_{\mathfrak{q}}$ and $\gamma_{\mathfrak{q}^{-1}}$ have the same fixed points. We shall illustrate the above discussion via the next example. Let us remark that the case that $\Gamma_*$ has two or more simple cycles need not be considered, as the proof of Proposition~\ref{fpf prop2} below shows.


\begin{example}\label{eg}Take $n=4$.
\begin{enumerate}[(a)]
\item Suppose that
\begin{align*}\hspace{5mm}f(x^{(1)},x^{(2)},x^{(3)},x^{(4)})& = (\varphi_{f,1}(x^{(0)}),\varphi_{f,2}(x^{(1)}),\varphi_{f,3}(x^{(3)}),\varphi_{f,4}(x^{(4)})),\\
g(x^{(1)},x^{(2)},x^{(3)},x^{(4)}) &= (\varphi_{g,1}(x^{(0)}),\varphi_{g,2}(x^{(2)}),\varphi_{g,3}(x^{(2)}),\varphi_{g,4}(x^{(2)})),
\end{align*}
where the $\varphi_{f,i},\varphi_{g,i}$ are as in (\ref{varphi cond}). According to Definition~\ref{graph def}, we have
\[\hspace{5mm}\begin{tikzpicture}[baseline = 0cm]
\node at (0.75,0) [name = 0] {$0$};
\node at (1.5,0) [name = 1] {$1$};
\node at (3,0) [name = 2] {$2$};
\node at (4.5,-0.75) [name = 3] {$3$};
\node at (4.5,0.75) [name = 4] {$4$};
\draw[-, thick]
(1) -- (2) (2) -- (3) (2) -- (4);
\draw[-, thick] (0) to [loop left, out= 150, in=210, looseness = 12.5] (0);
\end{tikzpicture}\hspace{2em}\mbox{and}\hspace{2em}\begin{tikzpicture}[baseline=0cm]
\node at (0.75,0) [name = 0] {$0$};
\node at (1.5,0) [name = 1] {$1$};
\node at (3,0) [name = 2] {$2$};
\node at (4.5,-0.75) [name = 3] {$3$};
\node at (4.5,0.75) [name = 4] {$4$};
\draw[<->,thick] (1) -- (2);
\draw[<->,thick] (2) -- (3);
\draw[<->,thick] (2) -- (4);
\end{tikzpicture}\]
for the graphs $\Gamma_{\{f,g\}}$ and $\Gamma_{(f,g)}$, respectively. Observe that
\[\hspace{5mm} \fb_3\fa_2 = (1 \rightarrow 2 \rightarrow 3)\]
is the unique simple directed path going from $1$ to $3$. Since the connected component of $1$ has no cycle and does not contain the vertex $0$, any other directed path $\fp$ from $1$ to $3$ may be written as
\[\hspace{5mm}\fp = (\mathfrak{r}_3^{-1}\mathfrak{r}_3)\fb_3(\mathfrak{r}_2^{-1}\mathfrak{r}_2)\fa_2(\mathfrak{r}_1^{-1}\mathfrak{r}_1).\]
Here, for each $i=1,2,3$, the symbol $\mathfrak{r}_i$ denotes a possibly empty directed path beginning at vertex $i$, and $\mathfrak{r}_i^{-1}$ is the same path going in the opposite direction. Since $\gamma_{\mathfrak{r}_i^{-1}}$ and $\gamma_{\mathfrak{r}_i}$ are inverses of each other, it follows that $\gamma_{\fp}$ is equal to $\gamma_{\fb_3\fa_2}$, and hence is independent of $\fp$. 
\vspace{1.5mm}
\item Suppose that
\begin{align*}\hspace{5mm}f(x^{(1)},x^{(2)},x^{(3)},x^{(4)})& = (\varphi_{f,1}(x^{(1)}),\varphi_{f,2}(x^{(3)}),\varphi_{f,3}(x^{(3)}),\varphi_{f,4}(x^{(3)})),\\
g(x^{(1)},x^{(2)},x^{(3)},x^{(4)}) &= (\varphi_{g,1}(x^{(2)}),\varphi_{g,2}(x^{(1)}),\varphi_{g,3}(x^{(2)}),\varphi_{g,4}(x^{(4)})),
\end{align*}
where the $\varphi_{f,i},\varphi_{g,i}$ are as in (\ref{varphi cond}). According to Definition~\ref{graph def}, we have
\[\hspace{5mm}\begin{tikzpicture}[baseline = 0cm]
\node at (0.5,0) [name = 0] {$0$};
\node at (1.5,-0.75) [name = 1] {$1$};
\node at (1.5,0.75) [name = 2] {$2$};
\node at (3,0) [name = 3] {$3$};
\node at (4.5,0) [name = 4] {$4$};
\draw[-, thick]
(1) -- (2) (1) -- (3) (2) -- (3) (3) -- (4);
\end{tikzpicture}\hspace{2em}\mbox{and}\hspace{2em}\begin{tikzpicture}[baseline=0cm]
\node at (0.5,0) [name = 0] {$0$};
\node at (1.5,-0.75) [name = 1] {$1$};
\node at (1.5,0.75) [name = 2] {$2$};
\node at (3,0) [name = 3] {$3$};
\node at (4.5,0) [name = 4] {$4$};
\path[<->,thick] (1) edge node {} (2);
\draw[<->, thick] (1) -- (2);
\draw[<->, thick] (1) -- (3);
\draw[<->, thick] (2) -- (3);
\draw[<->, thick] (3) -- (4);
\end{tikzpicture}\]
for the graphs $\Gamma_{\{f,g\}}$ and $\Gamma_{(f,g)}$, respectively. Define
\begin{align*}
\hspace{5mm} \mathfrak{q} & =  \fa_2\fb_3\fa_1 = (1\rightarrow2\rightarrow3\rightarrow1),\\
\mathfrak{q}^{-1} & = \fb_1\fa_3\fb_2 = (1\rightarrow3\rightarrow2\rightarrow1),
\end{align*}
which are the two directed simple cycles based at $1$. Since the connected component of $1$ has only one simple cycle and does not contain the vertex $0$, a directed path $\fp$ from $1$ to $4$ without consecutive repeated edges going in opposite directions may be written as
\[ \hspace{5mm}\fp = \fa_4\fb_3\fa_1\mathfrak{q}^m = (1\rightarrow2\rightarrow3\rightarrow4)\mathfrak{q}^m \mbox{ or }\fp = \fa_4\fb_2\mathfrak{q}^{-m} = (1\rightarrow3\rightarrow4)\mathfrak{q}^{-m}\]
for some non-negative integer $m$. The homomorphism $\gamma_\fp$ does depend on the choice of $\fp$. But as long as $\sigma^{(1)}\in T^{(1)}$ is a fixed point of $\gamma_{\mathfrak{q}}$, we have
\[\hspace{5mm}  \gamma_{\fa_2\fb_3\fa_1}(\sigma^{(1)}) = \sigma^{(1)}\mbox{ and so } \gamma_{\fb_3\fa_1}(\sigma^{(1)}) = \gamma_{\fb_2}(\sigma^{(1)}).\]
For any $m\in\mathbb{Z}$, the element $\sigma^{(1)}$ is also a fixed point of the automorphism $\gamma_{\mathfrak{q}^m}$, and we see that
\[\hspace{5mm}
\gamma_{\fa_4\fb_3\fa_1\mathfrak{q}^m}(\sigma^{(1)})=(\gamma_{\fa_4}\circ\gamma_{\fb_3\fa_1})(\sigma^{(1)})= (\gamma_{\fa_4}\circ\gamma_{\fb_2})(\sigma^{(1)}) = \gamma_{\fa_4\fb_2\mathfrak{q}^m}(\sigma^{(1)}).\]
It follows that the element $\gamma_{\fp}(\sigma^{(1)})$ is independent of $\fp$.
\end{enumerate}
\end{example}

\begin{prop}\label{fpf prop2}If $(f,g)$ is fixed point free, then the connected component of $\Gamma_{\{f,g\}}$ containing $0$ is a tree, and in each of the other connected components of $\Gamma_{\{f,g\}}$, the number of edges coincides with the number of vertices.
\end{prop}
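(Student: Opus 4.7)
The plan is a two-step argument: first, show by contrapositive that fixed point freeness forbids any component of $\Gamma_{\{f,g\}}$ other than the one containing $0$ from being a tree; then, use a global edge-vertex count to deduce both conclusions simultaneously.

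For step (i), suppose some component $C$ of $\Gamma_{\{f,g\}}$ with $0\notin C$ is a tree. I will build a nontrivial fixed point of $(f,g)$, contradicting the hypothesis. Fix $i_0\in C$ and a nontrivial $\tau\in T^{(i_0)}$, which exists because $T$ is non-trivial. Condition~(\ref{varphi cond}) forces every $\varphi_{f,k},\varphi_{g,k}$ indexed by an edge of $C$ to be bijective (since $0\notin C$), so by connectedness of $C$ together with the observations preceding Example~\ref{eg}, for each $j\in C$ there is a directed path $\fp_j$ in $\Gamma_{(f,g)}$ from $i_0$ to $j$ such that $\gamma_{\fp_j}(\tau)$ does not depend on the choice of $\fp_j$ (because $C$ has no cycle). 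Define
\[\sigma^{(j)} = \begin{cases}\gamma_{\fp_j}(\tau) & \text{if } j\in C,\\ 1 & \text{if } j\notin C.\end{cases}\]
To verify $f(\sigma)=g(\sigma)$, I would examine the equation $\varphi_{f,k}(\sigma^{(\theta_f(k))})=\varphi_{g,k}(\sigma^{(\theta_g(k))})$ at each index $k$: when both endpoints lie in $C$, this reduces via path-independence to the identity $\gamma_{\fa_k\fp_{\theta_f(k)}}(\tau)=\gamma_{\fp_{\theta_g(k)}}(\tau)$; otherwise both endpoints lie outside $C$ (edges respect components), so $\sigma$ vanishes there and both sides equal $1$ using that each $\varphi_{f,k},\varphi_{g,k}$ sends $1$ to $1$. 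Since $\sigma^{(i_0)}=\tau\neq 1$, this contradicts fixed point freeness of $(f,g)$.

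For step (ii), note that $\Gamma_{\{f,g\}}$ has $n+1$ vertices and exactly $n$ edges. Let its connected components have vertex counts $m_0,m_1,\dots$ (with $m_0$ for the component of $0$) and edge counts $e_0,e_1,\dots$, so $\sum m_i = n+1$ and $\sum e_i = n$. Each component is connected, hence $e_i\geq m_i-1$ with equality iff it is a tree. Step~(i) yields $e_i\geq m_i$ for every $i\neq 0$, so
\[-1 \;=\; \sum_i(e_i-m_i) \;=\; (e_0-m_0)+\sum_{i\neq 0}(e_i-m_i) \;\geq\; -1+0 \;=\; -1,\]
forcing equality throughout: $e_0 = m_0-1$ (the $0$-component is a tree) and $e_i = m_i$ for every $i\neq 0$, as claimed.

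The main technical delicacy is the verification of $f(\sigma)=g(\sigma)$ in step~(i); it hinges on the path-independence of $\gamma_{\fp_j}(\tau)$ on a tree, which itself relies on the bijectivity clause of (\ref{varphi cond}) so that every edge of $C$ gives rise to a pair of inverse arrows in $\Gamma_{(f,g)}$, precisely in the way discussed in the paragraph just before Example~\ref{eg}.
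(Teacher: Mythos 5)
Your proposal is correct and follows essentially the same route as the paper's proof: one shows that no component avoiding $0$ can be a tree by constructing a non-trivial fixed point from a non-trivial $\tau\in T^{(i_0)}$ via path-independent $\gamma_{\fp_j}$, and then the global count $\sum_i(e_i-m_i)=-1$ forces $e_0=m_0-1$ and $e_i=m_i$ for $i\neq 0$. The only cosmetic difference is that you verify $f(\sigma)=g(\sigma)$ coordinate-by-coordinate where the paper simply invokes Lemma~\ref{fg lemma}.
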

\begin{proof}Suppose that $(f,g)$ is fixed point free. Let $\Gamma_0,\Gamma_1,\dots,\Gamma_r$, with $r\geq 0$, denote the connected components of $\Gamma_{\{f,g\}}$, such that $0$ lies in $\Gamma_0$. For each $0\leq k\leq r$, write $v_k$ and $e_k$, respectively, for the number of vertices and edges in $\Gamma_k$, as well as note that $e_k\geq v_k - 1$ because $\Gamma_k$ is connected. Also, we have
\[ n + 1 = v_0 + v_1 + \cdots + v_r\mbox{ and }n = e_0 + e_1 + \cdots + e_r \]
by definition. Below, we shall show that $e_k\geq v_k$ for all $1\leq k\leq r$. Together with the above equalities, this implies that $e_0\leq v_0-1$. We then deduce that in fact $e_0 = v_0 -1$, namely $\Gamma_0$ is a tree, and that $e_k = v_k$ for $1\leq k\leq r$.

\vspace{1.5mm}

Suppose for contradiction that $e_{k_0} = v_{k_0}-1$, namely $\Gamma_{k_0}$ is a tree, for some $1\leq k_0\leq r$. Let $i_{0}$ be any vertex in $\Gamma_{k_0}$ and fix some non-trivial element $\sigma^{(i_0)}$ in $T^{(i_{0})}$. For any vertex $i\neq i_{0}$ in $\Gamma_{k_0}$, by connectedness and (\ref{varphi cond}), there is a directed path $\fp_i$ in $\Gamma_{k_0}$ from $i_0$ to $i$. Notice that $\gamma_{\fp_i}$ does not depend on the choice of $\fp_i$ by the discussion prior to Example~\ref{eg}. For $i\neq i_0$, define
\[ \sigma^{(i)} = \begin{cases}\gamma_{\fp_i}(\sigma^{(i_0)}) &\mbox{if $i$ is in $\Gamma_{k_0}$},\\ 1 & \mbox{if $i$ is not in $\Gamma_{k_0}$},\end{cases}\]
and put $\sigma = (\sigma^{(1)},\dots,\sigma^{(n)})$ But then $f(\sigma) = g(\sigma)$ by Lemma~\ref{fg lemma} and $\sigma\neq1$. This contradicts that $(f,g)$ is fixed point free.
\end{proof}

\begin{prop}\label{fpf prop3}Suppose that $T$ does not admit any fixed point free automorphism. If $(f,g)$ is fixed point free, then $\Gamma_{\{f,g\}}$ is a tree.
\end{prop}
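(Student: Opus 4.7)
The plan is to combine Proposition \ref{fpf prop2} with the hypothesis on $T$ via the construction of fixed points of $(f,g)$ that was previewed in the discussion before Example~\ref{eg}. By Proposition \ref{fpf prop2}, since $(f,g)$ is fixed point free, the connected component of $\Gamma_{\{f,g\}}$ containing $0$ is a tree, and every other connected component is connected with equal numbers of vertices and edges, hence is \emph{unicyclic} (i.e.\ contains exactly one simple cycle). Thus, to show $\Gamma_{\{f,g\}}$ is a tree, it suffices to show that no such unicyclic component can occur, and I would argue this by contradiction.

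Suppose for contradiction that $\Gamma_*$ is a connected component of $\Gamma_{\{f,g\}}$ not containing $0$ with exactly one simple cycle, and pick a vertex $i_0$ lying on this cycle. Let $\mathfrak{q}$ be one of the two directed simple cycles in $\Gamma_{(f,g)}$ based at $i_0$. The key technical step is to observe that $\gamma_{\mathfrak{q}}$ is an \emph{automorphism} of $T^{(i_0)}\cong T$: because no arrow in $\Gamma_{(f,g)}$ ends at $0$, the cycle avoids the vertex $0$; hence every edge $\mathfrak{e}_i$ on the underlying cycle has both endpoints $\theta_f(i),\theta_g(i)\ne 0$, so by (\ref{varphi cond}) both $\varphi_{f,i}$ and $\varphi_{g,i}$ are bijective, making both $\gamma_{\fa_i}$ and $\gamma_{\fb_i}$ isomorphisms between copies of $T$. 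Therefore $\gamma_{\mathfrak{q}}$ is a composition of isomorphisms.

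Since $T$ admits no fixed point free automorphism, there exists a non-trivial $\sigma^{(i_0)}\in T^{(i_0)}$ with $\gamma_{\mathfrak{q}}(\sigma^{(i_0)})=\sigma^{(i_0)}$ (and automatically $\gamma_{\mathfrak{q}^{-1}}(\sigma^{(i_0)})=\sigma^{(i_0)}$). For each vertex $i$ in $\Gamma_*$, choose a directed path $\fp_i$ from $i_0$ to $i$ (possible by connectedness and (\ref{varphi cond})), and set $\sigma^{(i)}=\gamma_{\fp_i}(\sigma^{(i_0)})$; by the discussion preceding Example~\ref{eg}, this value is independent of the choice of $\fp_i$ precisely because $\sigma^{(i_0)}$ is fixed by $\gamma_{\mathfrak{q}}$. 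For all vertices $i$ outside of $\Gamma_*$, set $\sigma^{(i)}=1$, and put $\sigma=(\sigma^{(1)},\dots,\sigma^{(n)})$, which is non-trivial since $\sigma^{(i_0)}\ne 1$.

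It remains to verify that $f(\sigma)=g(\sigma)$, using Lemma~\ref{fg lemma}. Any directed path $\fp$ in $\Gamma_{(f,g)}$ has $t(\fp)$ and $h(\fp)$ in the same connected component of $\Gamma_{\{f,g\}}$. If both endpoints lie in $\Gamma_*$, then concatenating $\fp$ with $\fp_{t(\fp)}$ gives a path from $i_0$ to $h(\fp)$, and path-independence of $\gamma_{(-)}(\sigma^{(i_0)})$ gives $\gamma_\fp(\sigma^{(t(\fp))})=\sigma^{(h(\fp))}$; if both endpoints lie outside $\Gamma_*$, then $\sigma^{(t(\fp))}=1$ forces $\gamma_\fp(\sigma^{(t(\fp))})=1=\sigma^{(h(\fp))}$. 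This contradicts fixed point freeness of $(f,g)$ and completes the proof. The main obstacle, as already noted, is justifying that the loop homomorphism $\gamma_{\mathfrak{q}}$ really is an automorphism; this is where the combinatorial fact that $\Gamma_{(f,g)}$ has no arrows ending at $0$ becomes essential.
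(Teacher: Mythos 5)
Your proposal is correct and follows essentially the same route as the paper: apply Proposition~\ref{fpf prop2} to reduce to a unicyclic component $\Gamma_*$ avoiding $0$, use the hypothesis on $T$ to find a non-trivial fixed point of the cycle automorphism $\gamma_{\mathfrak{q}}$, propagate it along directed paths (with path-independence as in the discussion before Example~\ref{eg}), and obtain a non-trivial fixed point of $(f,g)$. Your added justifications --- that $\gamma_{\mathfrak{q}}$ is genuinely an automorphism because the cycle avoids $0$, and the case check in verifying $f(\sigma)=g(\sigma)$ via Lemma~\ref{fg lemma} --- are details the paper leaves implicit, but the argument is the same.
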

\begin{proof}Suppose that $(f,g)$ is fixed point free. Suppose also for contradiction that  $\Gamma_{\{f,g\}}$ is not a tree, namely it is not connected, and let $\Gamma_*$ be a connected component not containing $0$. By Proposition~\ref{fpf prop2}, we have exactly one simple cycle in $\Gamma_*$. Then, by (\ref{varphi cond}), we have a corresponding directed simple cycle $\mathfrak{q}$ in $\Gamma_{(f,g)}$, based at the vertex $i_0$ say. Note that $\gamma_{\mathfrak{q}}$ is an automorphism on $T^{(i_0)}$, which cannot be fixed point free by hypothesis, and hence $\gamma_{\mathfrak{q}}(\sigma^{(i_0)}) = \sigma^{(i_0)}$ for some non-trivial element $\sigma^{(i_0)}$ in $T^{(i_0)}$.

\vspace{1.5mm}

For each vertex $i\neq i_{0}$ in $\Gamma_{*}$, by connectedness and (\ref{varphi cond}), there is a directed path $\fp_i$ in $\Gamma_{*}$ from $i_0$ to $i$. Notice that $\gamma_{\fp_i}(\sigma^{(i_0)})$ does not depend on the choice of $\fp_i$ by the discussion prior to Example~\ref{eg}. For $i\neq i_0$, define
\[ \sigma^{(i)} = \begin{cases}\gamma_{\fp_i}(\sigma^{(i_0)}) &\mbox{if $i$ is in $\Gamma_{*}$},\\ 1 & \mbox{if $i$ is not in $\Gamma_{*}$},\end{cases}\]
and put $\sigma = (\sigma^{(1)},\dots,\sigma^{(n)})$. But then $f(\sigma) = g(\sigma)$ by Lemma~\ref{fg lemma} and $\sigma\neq1$. This contradicts that $(f,g)$ is fixed point free.
\end{proof}

\begin{remark}\label{remark}By the classification theorem of finite simple groups, any finite insolvable group has no fixed point free automorphism; see \cite[Theorem 1.48]{G book}.\end{remark}

\subsection{Proof of Theorem~\ref{main thm}: first statement} Put
\[ \mathcal{F}(G,G) = \{(f,g)\in\End^0(G)\times\End^0(G): \Gamma_{\{f,g\}}\mbox{ is a tree}\}.\]
First, we shall prove the following general statement:

\begin{prop}\label{main prop}We have
\[ \#\mathcal{F}(G,G) = 2^n\cdot n!\cdot |\Aut(T)|^n\cdot \left(n|\Aut(T)|+1\right)^{n-1}.\]
\end{prop}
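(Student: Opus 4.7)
The plan is to stratify $\mathcal{F}(G,G)$ by the underlying labeled tree $\Gamma_{\{f,g\}}$ on the vertex set $\bN_{0,n}$. Each $f\in\End^0(G)$ is determined by the pair consisting of $\theta_f\in\Map(\bN_n,\bN_{0,n})$ and the tuple $(\varphi_{f,1},\dots,\varphi_{f,n})$ of homomorphisms satisfying (\ref{varphi cond}), and likewise for $g$. So I would factor the count as
\[
\#\mathcal{F}(G,G)\;=\;\sum_{(\theta_f,\theta_g)}\;\prod_{i=1}^n \#\bigl\{(\varphi_{f,i},\varphi_{g,i})\text{ satisfying (\ref{varphi cond})}\bigr\},
\]
where the outer sum ranges over those pairs $(\theta_f,\theta_g)$ for which $\Gamma_{\{\theta_f,\theta_g\}}$ is a tree on $n+1$ vertices.

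First I would count the outer index set. A tree on $\bN_{0,n}$ has exactly $n$ edges, each with two distinct endpoints, so for any fixed labeled tree $\mathcal{T}$ on $\bN_{0,n}$ the pairs $(\theta_f,\theta_g)$ yielding $\mathcal{T}$ are produced by (i) a bijection $\bN_n\to E(\mathcal{T})$ assigning the label $i$ to an edge, giving $n!$ choices, and (ii) an orientation of each labeled edge declaring which endpoint is $\theta_f(i)$ and which is $\theta_g(i)$, giving a further $2^n$ choices. Next, for each labeled and oriented tree, I would compute the inner product using (\ref{varphi cond}): for an edge $i$ with $\theta_f(i)=0$ the factor $\varphi_{f,i}$ is forced to be trivial while $\varphi_{g,i}$ ranges over the $|\Aut(T)|$ bijections $T^{(\theta_g(i))}\to T^{(i)}$, and symmetrically if $\theta_g(i)=0$; for an edge with both endpoints nonzero, the product contributes $|\Aut(T)|^2$. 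Since no edge of a tree is a loop at $0$, a tree with $\deg_{\mathcal{T}}(0)=d$ contributes precisely
\[
|\Aut(T)|^{d}\cdot|\Aut(T)|^{2(n-d)}\;=\;|\Aut(T)|^{2n-d}.
\]

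Finally I would count labeled trees on $\bN_{0,n}$ with a prescribed degree at $0$ via Prüfer sequences: the degree of $0$ equals one more than the number of occurrences of $0$ in the length-$(n-1)$ Prüfer sequence, so the number of such trees with $\deg(0)=d$ is $\binom{n-1}{d-1}n^{n-d}$. Writing $a=|\Aut(T)|$, assembling everything gives
\[
\#\mathcal{F}(G,G)\;=\;n!\cdot 2^n\cdot\sum_{d=1}^{n}\binom{n-1}{d-1}n^{n-d}\,a^{2n-d}.
\]
Factoring out $a^{n+1}$ and substituting $k=d-1$ collapses the sum via the binomial theorem:
\[
\sum_{d=1}^{n}\binom{n-1}{d-1}n^{n-d}a^{n-d}\;=\;\sum_{k=0}^{n-1}\binom{n-1}{k}(na)^{(n-1)-k}\;=\;(na+1)^{n-1},
\]
yielding $\#\mathcal{F}(G,G)=2^n\cdot n!\cdot a^n\cdot(na+1)^{n-1}$, as required.

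The only real obstacle is bookkeeping: separating the contribution of the outer combinatorial data (tree, labeling, orientation) from the inner algebraic data (the $\varphi$'s) cleanly enough that the degree-$d$ dependence appears as a single exponent, so that the Prüfer count combines with the binomial theorem to produce the clean closed form $(n|\Aut(T)|+1)^{n-1}$. Everything else is essentially a calculation once one observes that a loop at $0$ in $\Gamma_{\{f,g\}}$ is impossible under the tree hypothesis.
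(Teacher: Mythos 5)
Your proposal is correct and follows essentially the same route as the paper: stratify by the underlying labelled tree, count the $2^n\cdot n!$ labellings/orientations and the $|\Aut(T)|^{2n-d}$ choices of the $\varphi$'s, use $\binom{n-1}{d-1}n^{n-d}$ for trees with $\deg(0)=d$ (the paper cites Clarke where you invoke Pr\"ufer sequences), and finish with the binomial theorem. The only blemish is the phrase ``factoring out $a^{n+1}$,'' which should read $a^{n}$, as your displayed computation and final answer already reflect.
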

\begin{proof}Observe that for any tree $\Gamma$ with vertex set $\bN_{0,n}$, which by definition has exactly $n$ edges, we have the equality
\[ \#\{(\upmu,\upnu)\in(\bN_{0,n})^n\times(\bN_{0,n})^n : \Gamma_{\{\upmu,\upnu\}} = \Gamma\} = 2^n\cdot n!.\]
This is because we have $2^n\cdot n!$ ways to pick an orientation for each edge and then label the $n$ arrows as $\mathfrak{e}_i$ for $i\in\bN_{n}$. Once such a choice is made, define the $i$th entries of $\upmu$ and $\upnu$, respectively, to be the tail and the head of $\mathfrak{e}_i$. We then have $\Gamma_{\{\upmu,\upnu\}}=\Gamma$, and the fact that $\Gamma$ has no cycle implies that different choices give rise to different pairs $(\upmu,\upnu)$.

\vspace{1.5mm}

Now, for any $\upmu,\upnu\in(\bN_{0,n})^n$, say $\upmu = (u_1,\dots,u_n)$ and $\upnu = (v_1,\dots,v_n)$, put
\[ d(\upmu,\upnu) = \#\{i\in\bN_{0,n}:u_i=0\}+\#\{i\in\bN_{0,n}:v_i=0\},\]
which is also equal to the degree of the vertex $0$ in $\Gamma_{\{\upmu,\upnu\}}$. Then, we have
\[ \#\{(f,g)\in\End^0(G)\times \End^0(G) : (\uptheta_f,\uptheta_g) = (\upmu,\upnu)\} = |\Aut(T)|^{2n-d(\upmu,\upnu)}\]
by (\ref{varphi cond}), and note that for $\Gamma_{\{\upmu,\upnu\}}$ to be a tree, necessarily $1\leq d(\upmu,\upnu)\leq n$.

\vspace{1.5mm}

For each integer $1\leq d\leq n$, let $\mathcal{T}_{n}(d)$ be the number of labelled trees on $n+1$ vertices, labelled by elements of $\bN_{0,n}$, in which the vertex $0$ has degree $d$, where two such labelled trees are regarded as distinct if and only if there is a pair of vertices which are joined by an edge in one tree but not in the other. Then, from the above discussion, it follows that
\[ \#\mathcal{F}(G,G) = \sum_{d=1}^{n}\left(\mathcal{T}_n(d)\cdot 2^n\cdot n!\cdot |\Aut(T)|^{2n-d}\right).\]
For each $1\leq d\leq n$, it was shown in \cite{Clarke} that
\[ \mathcal{T}_n(d) = {n-1 \choose d-1}n^{n-d}.\]
A simple calculation using the binomial theorem then yields the claim.
\end{proof}

Now, suppose that $T$ is non-abelian simple. Then, for any $f,g\in\End^0(G)$, 

\noindent by Propositions~\ref{fpf prop1} and~\ref{fpf prop3} as well as Remark~\ref{remark}. we have
\[\mbox{the pair $(f,g)$ is fixed point free if and only if $\Gamma_{\{f,g\}}$ is a tree}.\]
From (\ref{inner equation}) and (\ref{End eqn}), we see that
\[ \#\E'_\inn(G,G) = \frac{1}{|\Aut(T)|^n\cdot n!}\cdot\#\mathcal{F}(G,G).\]
The first statement in Theorem~\ref{main thm} now follows from Proposition~\ref{main prop}.

\section{Regular subgroups arising from outer automorphisms}\label{out sec}

\subsection{Criteria for regularity} Throughout this subsection, consider a subgroup $\mathcal{N}$ of $\Hol(G)$ isomorphic to $G$, with $\proj_{\Aut}(\mathcal{N})\subset\Aut^0(G)$.  As noted in \cite[Proposition 2.1]{Tsang HG}, which follows from (\ref{Hol(N)}), we have
\begin{equation}\label{N shape} \mathcal{N} = \{\rho(\fg(\sigma))\cdot \ff(\sigma):\sigma\in G\},\mbox{ where }\begin{cases}\ff\in\Hom(G,\Aut^0(G)) \\\fg\in\Map(G,G) \end{cases}\end{equation}
are such that
\[\label{g prop}\fg(\sigma\tau) = \fg(\sigma)\cdot \ff(\sigma)(\fg(\tau))\mbox{ for all }\sigma,\tau\in G.\]
Moreover, as one easily sees, we have
\begin{equation}\label{regularity}\mathcal{N}\mbox{ is regular if and only if }\fg\mbox{ is bijective}.
\end{equation}
Recall (\ref{Aut0}) and then define $\ff_{S_n}$ to be the homomorphism $\ff$ composed with the natural projection map $\Aut^0(G)\longrightarrow S_n$. Let us first make the following observation, which is motivated by an argument in \cite[p. 84]{Childs simple}.

\begin{prop}\label{out prop1}Suppose that the outer automorphism group $\Out(T)$ of $T$ is solvable and that $\ker(\ff_{S_n})$ is perfect. Then, we have $\ff(\ker(\ff_{S_n}))\subset\Inn(G)$. 
\end{prop}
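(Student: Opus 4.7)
Write $K = \ker(\ff_{S_n})$. By definition of $\ff_{S_n}$ as the composition of $\ff$ with $\Aut^0(G) \to S_n$, the restriction $\ff|_K$ takes values in the kernel of $\Aut^0(G) \to S_n$, which by (\ref{Aut0}) is exactly $\Aut(T)^n$. So I have a homomorphism $\ff|_K: K \to \Aut(T)^n$.

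Next I compose with the componentwise projection $\pi : \Aut(T)^n \to \Out(T)^n$, giving a homomorphism $\psi = \pi \circ \ff|_K: K \to \Out(T)^n$. Its image $\psi(K)$ is a quotient of $K$, hence perfect by hypothesis. But $\psi(K)$ is also a subgroup of $\Out(T)^n$, which is solvable since $\Out(T)$ is solvable. The only group that is simultaneously perfect and solvable is the trivial one, so $\psi(K) = 1$, which means $\ff(K) \subset \ker(\pi) = \Inn(T)^n$.

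Finally, I identify $\Inn(T)^n$ with $\Inn(G)$ inside $\Aut^0(G)$. Since $G = T^n$ has $Z(G) = Z(T)^n$, conjugation by $(\sigma_1, \ldots, \sigma_n) \in G$ acts componentwise as the automorphism $(\mathrm{conj}_{\sigma_1}, \ldots, \mathrm{conj}_{\sigma_n}) \in \Aut(T)^n \subset \Aut^0(G)$, so $\Inn(G) = \Inn(T)^n$ under the identification (\ref{Aut0}). This gives $\ff(K) \subset \Inn(G)$, as required.

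The whole argument is essentially formal once one sees the right composition to consider, so there is no real obstacle; the only point that needs a moment of care is the last identification $\Inn(G) = \Inn(T)^n$, which justifies transferring the conclusion from the componentwise inner automorphism group of $T^n$ back to $\Inn(G)$ as a subgroup of $\Aut^0(G) = \Aut(G)$.
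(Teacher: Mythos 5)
Your argument is correct and is essentially identical to the paper's own proof: both factor $\ff$ restricted to $\ker(\ff_{S_n})$ through $\Aut(T)^n$, compose with the quotient to $\Out(T)^n$, and observe that a perfect group has no non-trivial image in a solvable group, whence $\ff(\ker(\ff_{S_n}))\subset\Inn(T)^n=\Inn(G)$. The extra care you take with the identification $\Inn(G)=\Inn(T)^n$ is a fine addition but does not change the argument.
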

\begin{proof}Observe that the homomorphism
\[\begin{tikzcd}[column sep = 1.5cm] \ker(\ff_{S_n})\arrow{r}{\ff}& \Aut(T)^n \arrow{r}{\mbox{\tiny quotient}} &\Out(T)^n,\end{tikzcd}\]
is trivial because $\ker(\ff_{S_n})$ is perfect but $\Out(T)$ is solvable. Thus, indeed the image of $\ker(\ff_{S_n})$ under $\ff$ lies in $\Inn(T)^n$, which is equal to $\Inn(G)$.\end{proof}

\begin{remark}\label{remark'}By Schreier's conjecture, which is a consequence of the classification theorem of finite simple groups, the outer automorphism group of any finite non-abelian simple group is solvable; see \cite[Theorem 1.46]{G book}.
\end{remark}

Next, we shall  investigate when it is possible  for $\mathcal{N}$ to be regular, or equivalently, for $\fg$ to be bijective.

\vspace{1.5mm}

Given any $\sigma\in G$, the assumption that $\ff(G)$ lies in $\Aut^0(G)$ implies 
\begin{equation}\label{fsigma} \ff(\sigma)(x^{(1)},\dots,x^{(n)}) = (\varphi_{\sigma,1}(x^{(\theta_\sigma(1))}),\dots,\varphi_{\sigma,n}(x^{(\theta_{\sigma}(n))})),\end{equation}
where $\theta_\sigma = \ff_{S_n}(\sigma)$, and $\varphi_{\sigma,i}\in\Aut(T)$ sends $T^{(\theta_{\sigma}(i))}$ to $T^{(i)}$. Also, write
\[ \fg(\sigma) = (a_{\sigma}^{(1)},\dots,a_{\sigma}^{(n)}).\]
In the above notation, we then have the following lemma:

\begin{lem}\label{relations lem} Let $\sigma,\tau\in G$ be such that $\sigma\tau = \tau\sigma$ and $\tau\in\ker(\ff_{S_n})$. Then, for all $i\in\bN_n$, we have the relation
\[\varphi_{\sigma,i}(a_\tau^{(\theta_\sigma(i))}) = (a_\sigma^{(i)})^{-1}\cdot a_\tau^{(i)}\cdot\varphi_{\tau,i}(a_\sigma^{(i)}).\]
\end{lem}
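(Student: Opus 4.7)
The plan is to read the identity off from the twisted cocycle relation
\[ \fg(\sigma\tau) = \fg(\sigma)\cdot\ff(\sigma)(\fg(\tau)) \]
that is stated immediately after (\ref{N shape}), applied to both orderings of the commuting pair $\sigma,\tau$. Since $\sigma\tau = \tau\sigma$, the two instances produce
\[ \fg(\sigma)\cdot\ff(\sigma)(\fg(\tau)) = \fg(\tau)\cdot\ff(\tau)(\fg(\sigma)), \]
and the lemma should drop out by comparing $i$-th components and a single rearrangement.

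To carry this out, I would first invoke (\ref{fsigma}) to write the $i$-th component of $\ff(\sigma)(\fg(\tau))$ as $\varphi_{\sigma,i}(a_\tau^{(\theta_\sigma(i))})$. Next, I would use the hypothesis $\tau\in\ker(\ff_{S_n})$, which forces $\theta_\tau$ to be the identity of $S_n$, so that $\ff(\tau)$ acts on $G$ componentwise without any coordinate permutation; in particular, the $i$-th component of $\ff(\tau)(\fg(\sigma))$ is $\varphi_{\tau,i}(a_\sigma^{(i)})$. Carrying out the product in the $i$-th factor $T^{(i)}$, the equality of the two sides of the commutation identity above becomes
\[ a_\sigma^{(i)}\cdot\varphi_{\sigma,i}(a_\tau^{(\theta_\sigma(i))}) = a_\tau^{(i)}\cdot\varphi_{\tau,i}(a_\sigma^{(i)}), \]
and left-multiplying by $(a_\sigma^{(i)})^{-1}$ yields exactly the stated formula.

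There is no substantive obstacle here; the entire argument is a one-line manipulation of the cocycle identity. The only point worth flagging is the asymmetric role played by the two hypotheses: the commutation $\sigma\tau = \tau\sigma$ is what permits the two instances of the cocycle to be identified, whereas the assumption $\tau\in\ker(\ff_{S_n})$ is precisely what prevents a second permutation $\theta_\tau$ from appearing on the right-hand side. It is this asymmetry that produces the non-trivial-looking twist in the displayed identity, and it also explains why the lemma is only claimed under the restrictive hypothesis that $\tau$ (rather than $\sigma$) lies in $\ker(\ff_{S_n})$.
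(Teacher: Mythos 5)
Your proposal is correct and is essentially identical to the paper's own proof: both apply the cocycle identity to $\fg(\sigma\tau)$ and $\fg(\tau\sigma)$, use $\tau\in\ker(\ff_{S_n})$ to suppress the permutation in $\ff(\tau)$, and compare $i$-th components. No issues.
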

\begin{proof}The hypothesis $\tau\in\ker(\ff_{S_n})$ implies that
\begin{align*}
\fg(\sigma\tau) & = \fg(\sigma)\cdot \ff(\sigma)(\fg(\tau)) =  (a_\sigma^{(1)}\varphi_{\sigma,1}(a_\tau^{(\theta_\sigma(1))}),\dots,a_\sigma^{(n)}\varphi_{\sigma,n}(a_\tau^{(\theta_\sigma(n))})),\\
\fg(\tau\sigma) & = \fg(\tau)\cdot\ff(\tau)(\fg(\sigma)) =  (a_\tau^{(1)}\varphi_{\tau,1}(a_\sigma^{(1)}),\dots, a_\tau^{(n)}\varphi_{\tau,n}(a_\sigma^{(n)})).
\end{align*}
Since $\sigma\tau = \tau\sigma$, the claim is now clear.
\end{proof}

In what follows, fix a prime $p$ dividing $|T|$. For each $i\in\bN_n$, further choose a subgroup $P^{(i)}$ of $T^{(i)}$ of order $p$, and put
\begin{equation}\label{H def} H = P^{(1)}\times\cdots\times P^{(n)},\end{equation}
which is an elementary abelian $p$-group of rank $n$. Write $m\geq0$ for the rank of $\ff_{S_n}(H)$, and let us consider the $\ff_{S_n}(H)$-action on $\bN_n$; the restriction to the subgroup $H$ is only for convenience. We may decompose
\[ \bN_n = X_0\sqcup X_1\sqcup\cdots\sqcup X_r, \mbox{ with }r\geq 0,\]
where $X_0$ is the set of fixed points and $X_1,\dots,X_r$ are the non-trivial orbits 

\noindent of $\bN_n$ under the $\ff_{S_n}(H)$-action. For each $1\leq k\leq r$, let us fix a representative $i_k\in X_k$, and for each $i\in X_k$, choose an element $\sigma_i\in H$ such that $\theta_{\sigma_i}(i_k) = i$. 

\begin{lem}\label{g bound lem}Suppose that for all $i\in\bN_n\setminus X_0$, the element $\sigma_i\in H$ commutes with every element of $\ker(\ff_{S_n})$. Then, the image of $\ker(\ff_{S_n})$ under $\fg$ lies in
\[\prod_{i\in X_0} T^{(i)}\times\prod_{k=1}^{r}\left\{\prod_{i\in X_k}\varphi_{\sigma_i,i_k}^{-1}\left((a_{\sigma_i}^{(i_k)})^{-1}\cdot a\cdot\varphi(a_{\sigma_i}^{(i_k)})\right):a\in T^{(i_k)},\varphi\in\Aut(T^{(i_k)})\right\}.\]
Moreover, in the case that $\ff(\ker(\ff_{S_n}))\subset\Inn(G)$, for each $1\leq k\leq r$, we may replace $\Aut(T^{(i_k)})$ by $\Inn(T^{(i_k)})$ in the above, and in particular, we have
\begin{equation}\label{inequality}\#\fg(\ker(\ff_{S_n}))\leq |T|^{\#X_0}\cdot(|T||\Inn(T)|)^{r}\leq |T|^{\#X_0 + 2r}.\end{equation}
\end{lem}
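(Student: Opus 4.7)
The plan is to apply Lemma~\ref{relations lem} orbit-by-orbit under the $\ff_{S_n}(H)$-action on $\bN_n$, using the commutativity hypothesis to pin down each component $a_\tau^{(i)}$ of $\fg(\tau)$ for $\tau \in \ker(\ff_{S_n})$. For $i \in X_0$ the vertex is a fixed point of $\ff_{S_n}(H)$, so no constraint emerges from the pairs $(\sigma_j,\tau)$ and $a_\tau^{(i)}$ is a free parameter in $T^{(i)}$, contributing the factor $\prod_{i\in X_0}T^{(i)}$. For each non-trivial orbit $X_k$ and each $i \in X_k$, I would apply Lemma~\ref{relations lem} to the pair $(\sigma_i,\tau)$; its hypotheses hold because $\sigma_i \in H$ commutes with $\tau$ by assumption and $\tau \in \ker(\ff_{S_n})$. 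Since $\theta_{\sigma_i}(i_k) = i$, rearranging the identity of that lemma yields
\[
a_\tau^{(i)} \;=\; \varphi_{\sigma_i,i_k}^{-1}\!\bigl((a_{\sigma_i}^{(i_k)})^{-1}\cdot a_\tau^{(i_k)}\cdot \varphi_{\tau,i_k}(a_{\sigma_i}^{(i_k)})\bigr),
\]
so every component $a_\tau^{(i)}$ with $i \in X_k$ is determined by the pair $(a,\varphi) := (a_\tau^{(i_k)},\varphi_{\tau,i_k}) \in T^{(i_k)} \times \Aut(T^{(i_k)})$. Taking the product over $k$ produces the claimed ambient set.

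For the sharpening under $\ff(\ker(\ff_{S_n})) \subset \Inn(G)$, I would observe that any $\tau \in \ker(\ff_{S_n})$ has $\theta_\tau = \mathrm{id}$, so $\ff(\tau)$ restricts on $T^{(i)}$ to $\varphi_{\tau,i}$. If $\ff(\tau)$ is inner on $G = T^{(1)}\times\cdots\times T^{(n)}$, then it is conjugation by some tuple $(g^{(1)},\dots,g^{(n)})$, which acts componentwise; hence each $\varphi_{\tau,i_k}$ is an inner automorphism of $T^{(i_k)}$, justifying the replacement of $\Aut(T^{(i_k)})$ by $\Inn(T^{(i_k)})$. The bound (\ref{inequality}) is then immediate by parameter counting: the $X_0$-block contributes at most $|T|^{\#X_0}$, each of the $r$ non-trivial orbits contributes at most $|T|\cdot|\Inn(T)|$ choices of $(a,\varphi)$, and $|\Inn(T)| \leq |T|$ converts the first upper bound into the exponent $\#X_0 + 2r$.

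The only piece of bookkeeping to verify is internal consistency of the parametrization at the anchor $i = i_k$ itself: one must check that choosing $\sigma_{i_k} = 1 \in H$ (valid since $\theta_1 = \mathrm{id}$ fixes $i_k$) together with $\fg(1) = 1$ and $\varphi_{1,i_k} = \mathrm{id}$ makes the displayed formula collapse to $a_\tau^{(i_k)} = a$. Beyond this, I do not expect any substantial obstacle: the argument is essentially a disciplined unpacking of Lemma~\ref{relations lem}, where the real content is that the commutativity hypothesis on the $\sigma_i$ is precisely what legalizes each application of that lemma and thus compresses the orbit $X_k$'s worth of components into the two-parameter family indexed by $(a,\varphi)$.
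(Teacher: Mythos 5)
Your proposal is correct and is precisely the argument the paper intends: the paper's own proof is the one-line remark that the lemma ``follows immediately from Lemma~\ref{relations lem}'', and your orbit-by-orbit application of that lemma to the pairs $(\sigma_i,\tau)$, together with the observation that inner automorphisms of $G$ act componentwise, is exactly the unpacking of that remark. The counting at the end and the consistency check at the anchor $i_k$ are both fine (and the latter is not even needed for the stated containment, since the displayed set is only an ambient superset).
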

\begin{proof}This follows immediately from Lemma~\ref{relations lem}.
\end{proof}
%
%

\begin{lem}\label{group lem}For each $1\leq k\leq r$, we have $\#X_k = p^{m_k}$ for some $m_k\in\bN$. In addition, we have the relations
\[ n - \#X_0 = \sum_{k=1}^{r} p^{m_k}\mbox{ and } m \leq \sum_{k=1}^{r} m_k.\]
\end{lem}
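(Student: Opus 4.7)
The plan is to exploit the fact that $H$ is an elementary abelian $p$-group of rank $n$, so its image $\ff_{S_n}(H)$ is a quotient thereof and hence is itself elementary abelian (of exponent $p$), and in particular is a finite $p$-group of rank $m$. All three assertions of the lemma will then follow from standard orbit--stabilizer and transitive-abelian-action arguments.

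First I would address the orbit sizes. Since $\ff_{S_n}(H)$ is a $p$-group acting on $X_k$ transitively, the orbit--stabilizer theorem gives $\#X_k \mid p^m$, so $\#X_k = p^{m_k}$ for some $m_k \geq 0$; non-triviality of the orbit forces $m_k \geq 1$, i.e.\ $m_k \in \bN$. The equality
\[ n - \#X_0 \;=\; \sum_{k=1}^{r} p^{m_k} \]
then follows immediately from the disjoint union decomposition $\bN_n = X_0 \sqcup X_1 \sqcup \cdots \sqcup X_r$.

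The inequality $m \leq \sum_{k=1}^{r} m_k$ requires slightly more work. For each $1 \leq k \leq r$, let $\pi_k : \ff_{S_n}(H) \to \Perm(X_k)$ be the restriction-to-$X_k$ homomorphism. The image $\pi_k(\ff_{S_n}(H))$ is a transitive abelian subgroup of $\Perm(X_k)$, hence acts regularly on $X_k$, so $|\pi_k(\ff_{S_n}(H))| = \#X_k = p^{m_k}$; as a quotient of an elementary abelian $p$-group, it is itself elementary abelian of rank exactly $m_k$. Consider now the product map
\[ \Phi \;=\; (\pi_1,\dots,\pi_r) : \ff_{S_n}(H) \longrightarrow \prod_{k=1}^{r} \pi_k(\ff_{S_n}(H)). \]
Any element of $\ker(\Phi)$ fixes every point of $X_1 \cup \cdots \cup X_r$ pointwise, and also fixes every point of $X_0$ pointwise by the very definition of $X_0$; hence it acts trivially on $\bN_n$ and is the identity of $\ff_{S_n}(H) \subset S_n$. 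Thus $\Phi$ embeds $\ff_{S_n}(H)$ into an elementary abelian $p$-group of rank $\sum_{k=1}^{r} m_k$, yielding $m \leq \sum_{k=1}^{r} m_k$.

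I do not anticipate a genuine obstacle here; this is a purely group-theoretic fact independent of the Hopf--Galois machinery developed in the paper. The only step worth stating carefully is the triviality of $\ker(\Phi)$, which relies on the simple but easy-to-overlook observation that elements of $\ff_{S_n}(H)$ pointwise fix $X_0$ by construction, so that vanishing on each non-trivial orbit already forces such an element to be the identity.
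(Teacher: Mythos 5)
Your proposal is correct and follows essentially the same route as the paper: orbit--stabilizer for the orbit sizes, the disjoint-union decomposition for the equality, and for the inequality the restriction maps to each $X_k$ (the paper's $\Delta_k$ are exactly your $\pi_k(\ff_{S_n}(H))$), with the observation that a transitive abelian action is regular and that $\ff_{S_n}(H)$ embeds into $\prod_k \Delta_k$. Your explicit verification that $\ker(\Phi)$ is trivial (using that $X_0$ is fixed pointwise) is a point the paper leaves implicit, but the argument is the same.
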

\begin{proof}The first claim is clear because $X_1,\dots,X_r$ are non-trivial orbits under the action of a $p$-group. The equality is also obvious from the definition.

\vspace{1.5mm}
 
To prove the inequality, given any subset $X$ of $\bN_n$, denote by $S_X$ its symmetric group regarded as a subgroup of $S_n$. For each $1\leq k\leq r$, define $\Delta_k$ to be the image, which is an elementary abelian group, of the homomorphism
\[\begin{tikzcd}[column sep = 2cm]\ff_{S_n}(H)\arrow{r}{\mbox{\tiny inclusion}}& S_n \arrow{r}{\tiny \theta\mapsto \theta|_{X_k}} & S_{X_k}\arrow{r}{\mbox{\tiny inclusion}} &S_n.\end{tikzcd}\]
Plainly, the $\Delta_k$-action on $X_k$ is transitive. For any $\theta\in\ff_{S_n}(H)$, since $\ff_{S_n}(H)$ is abelian, if $\theta$ fixes an element of the orbit $X_k$, then $\theta$ fixes all elements of $X_k$. This means the $\Delta_k$-action on $X_k$ is also free. Thus, we have $|\Delta_k| = \#X_k$, and so $\Delta_k$ has rank equal to $m_k$. Notice that $\ff_{S_n}(H)$ is contained in $\Delta_1\times\cdots\times\Delta_r$. Since $m$ is defined to be the $p$-rank of $\ff_{S_n}(H)$, the stated equality follows.
\end{proof}

\begin{prop}\label{out prop2}Suppose that $\ff_{S_n}(H)\neq1$ and that $|\ff_{S_n}(G)| = |T|^m$. If $\fg$ is bijective and (\ref{inequality}) holds, then necessarily $p\leq 3$.
\end{prop}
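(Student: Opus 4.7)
The plan is to combine the bijectivity of $\fg$ with the inequality (\ref{inequality}) and the arithmetic relations of Lemma~\ref{group lem} to extract a numerical constraint forcing $p\leq 3$.

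First, since $\fg$ is bijective on $G$, its restriction to $\ker(\ff_{S_n})$ is injective, so
\[ \#\fg(\ker(\ff_{S_n})) = |\ker(\ff_{S_n})| = \frac{|G|}{|\ff_{S_n}(G)|} = \frac{|T|^n}{|T|^m} = |T|^{n-m}\]
by the hypothesis $|\ff_{S_n}(G)| = |T|^m$. Substituting into (\ref{inequality}) and comparing exponents gives
\[ n - m \leq \#X_0 + 2r.\]

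Next, I would apply the two relations of Lemma~\ref{group lem}. From $n = \#X_0 + \sum_{k=1}^{r} p^{m_k}$ and $m \leq \sum_{k=1}^{r} m_k$ we deduce
\[ n - m \geq \#X_0 + \sum_{k=1}^{r}\bigl(p^{m_k} - m_k\bigr),\]
and combining this with the previous bound yields $\sum_{k=1}^{r}(p^{m_k}-m_k) \leq 2r$. Since $\ff_{S_n}(H) \neq 1$, the action of $\ff_{S_n}(H)$ on $\bN_n$ admits at least one non-trivial orbit, so $r\geq 1$ and each $m_k \geq 1$. The function $x\mapsto p^x - x$ is strictly increasing on $[1,\infty)$, so $p^{m_k}-m_k \geq p-1$ for every $k$. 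Summing over $k$ gives $r(p-1) \leq 2r$, which forces $p\leq 3$.

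The argument is essentially a bookkeeping exercise once these pieces are lined up, so I do not anticipate a substantive obstacle. The only points requiring care are the passage from $\#\fg(\ker(\ff_{S_n}))$ to $|T|^{n-m}$, which relies on the bijectivity hypothesis and the stated cardinality of $\ff_{S_n}(G)$, and the observation that $\ff_{S_n}(H) \neq 1$ forces $r \geq 1$ — immediate, since any non-trivial $\theta \in \ff_{S_n}(H)$ must move some $i \in \bN_n$, placing $i$ in a non-trivial orbit.
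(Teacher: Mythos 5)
Your proof is correct and follows essentially the same route as the paper's: bijectivity of $\fg$ gives $\#\fg(\ker(\ff_{S_n})) = |T|^{n-m}$, which combined with (\ref{inequality}) yields $n-m\leq \#X_0+2r$, and Lemma~\ref{group lem} then reduces everything to the inequality $\sum_{k}(p^{m_k}-m_k)\leq 2r$, which fails for $p\geq 5$. The paper phrases the final numerical step as $p^x-2>x$ for $x\geq 1$ when $p\geq 5$ rather than your lower bound $p^{m_k}-m_k\geq p-1$, but this is the same estimate rearranged.
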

\begin{proof}The hypothesis implies that $r\geq1$ and that $|\ker(\ff_{S_n})| = |T|^{n-m}$. Suppose now that $\fg$ is bijective and that (\ref{inequality}) holds. Then, we have
\[ |T|^{n-m} \leq |T|^{\# X_0 + 2r}\mbox{ and so }n-\#X_0 \leq 2r+m.\]
Using Lemma~\ref{group lem}, we further deduce that
\[ \sum_{k=1}^{r}p^{m_k} \leq 2r + \sum_{k=1}^{r}m_k\mbox{ and so }\sum_{k=1}^{r}(p^{m_k}-2)\leq \sum_{k=1}^{r}m_k.\]
For $p\geq 5$, we have $p^x - 2 > x$ for all $x\geq1$. Hence, we must have $p\leq3$ for the above inequality to hold.
\end{proof}

\subsection{Proof of Theorem~\ref{main thm}: second statement} Suppose that $T$ is non-abelian simple. By (\ref{End eqn}), a subgroup $\mathcal{N}$ of $\Hol(G)$ isomorphic to $G$ is of the shape (\ref{N shape}), and we may use the same notation as in the previous subsection. 

\vspace{1.5mm}

It is known and is not hard to show that the normal subgroups of $G$ are exactly all the products among $T^{(1)},\dots,T^{(n)}$. Thus, there exists $0\leq m\leq n$, and also distinct $i_1,\dots,i_{n-m}\in\bN_n$, such that
\[\label{ker} \ker(\ff_{S_n}) = T^{(i_1)}\times\cdots\times T^{(i_{n-m})}.\]
Put $\mathbb{I} = \bN_n\setminus\{i_1,\dots,i_{n-m}\}$, which has size $m$. The above implies that
\[ \ff_{S_n}(G) \simeq \frac{G}{\ker(\ff_{S_n})}\simeq \prod_{i\in \mathbb{I}} T^{(i)},\]
and so it has order equal to $|T|^m$. Let $H = P^{(1)}\times\cdots\times P^{(n)}$ be defined as in (\ref{H def}). Then, similarly we have
\[ \ff_{S_n}(H) \simeq \frac{H}{\ker(\ff_{S_n})\cap H}\simeq \prod_{i\in\mathbb{I}} P^{(i)},\]
and so it has $p$-rank equal to $m$, which agrees with the $m$ defined after (\ref{H def}). For any $\sigma\in H$,  by projecting it onto the $i$th components for $i\in\mathbb{I}$, we obtain an element
\[ \sigma'\in\prod_{i\in\mathbb{I}} P^{(i)}\mbox{ such that }\theta_{\sigma} = \theta_{\sigma'},\mbox{ where }\theta_\sigma = \ff_{S_n}(\sigma)\mbox{ and }\theta_{\sigma'} = \ff_{S_n}(\sigma')\]
are as in (\ref{fsigma}). This element $\sigma'$ lies in $H$ and commutes with every \mbox{element of}

\par\noindent$\ker(\ff_{S_n})$. Hence, for all $i\in\bN_n\setminus X_0$, we may pick the $\sigma_i\in H$ defined prior to Lemma~\ref{g bound lem} such that it commutes with every element of $\ker(\ff_{S_n})$. Note that $\ff(\ker(\ff_{S_n}))\subset\Inn(G)$ by Proposition~\ref{out prop1} and Remark~\ref{remark'}. From Lemma~\ref{g bound lem}, we then deduce that (\ref{inequality}) holds.

\vspace{1.5mm}

Now, suppose that $\mathcal{N}$ is regular, which implies that $\fg$ is bijective by (\ref{regularity}). Note that $\proj_{\Aut}(\mathcal{N}) = \ff(G)$ and suppose further for contradiction that we have $\ff(G)\not\subset\Inn(G)$. This means that $m\geq1$ because $\ff(\ker(\ff_{S_n}))\subset\Inn(G)$. But then from Proposition~\ref{out prop2}, we deduce that $|T|$ can only be divisible by the primes $p\leq 3$, which is impossible by Burnside's theorem. It now follows that necessarily $\ff(G)\subset \Inn(G)$, and hence $\E'_\out(G,G)$ must be empty, which proves the second claim in Theorem~\ref{main thm}.

%
%

\section{Acknowledgments} 

The author would like to thank the referee for pointing out some small errors and unclear arguments in the original manuscript, which helped improve the exposition significantly.


\begin{thebibliography}{99}

\bibitem{By96}
N. P. Byott, \emph{Uniqueness of Hopf-Galois structure of separable field extensions}, Comm. Algebra 24 (1996), no. 10, 3217--3228. Corrigendum, \emph{ibid}. no. 11, 3705.

\bibitem{Byott simple}
N. P. Byott, \emph{Hopf-Galois structures on field extensions with simple Galois groups}, Bull. London Math. Soc. 36 (2004), no. 1, 23--29.

\bibitem{Byott Childs}
N. P. Byott and L. N. Childs, \emph{Fixed-point free pairs of homomorphisms and nonabelian Hopf-Galois structures}, New York J. Math. 18 (2012), 707--731.

\bibitem{Childs simple}
S. Carnahan and L. N. Childs, \emph{Counting Hopf-Galois structures on non-abelian Galois field extensions}, J. Algebra 218 (1999), no. 1, 81--92.

\bibitem{Childs book}
L. N. Childs, \emph{Taming wild extensions: Hopf algebras and local Galois module theory}. Mathematical Surveys and Monographs, 80. American Mathematical Society, Providence, RI, 2000.

\bibitem{Clarke}
L. E. Clarke, \emph{On Cayley's formula for counting trees}, J. London Math. Soc. 33 (1958), 471--474.

\bibitem{G book}
D. Gorenstein, \emph{Finite simple groups. An introduction to their classification}. University Series in Mathematics. Plenum Publishing Corp., New York, 1982.

\bibitem{GP}
C. Greither and B. Pareigis, \emph{Hopf-Galois theory for separable field extensions}, J. Algebra 106 (1987), no. 1, 261--290.

\bibitem{Tsang HG}
C. Tsang, \emph{Non-existence of Hopf-Galois structures and bijective crossed homomorphism}, J. Pure Appl. Algebra 223 (2019), no. 7, 2801--2821.

\end{thebibliography}
\end{document}